\documentclass[10pt,a4paper]{amsart}

\usepackage[utf8]{inputenc}

\usepackage{amsthm}
\usepackage{amssymb}
\usepackage{mathtools}
\usepackage{a4wide}
\usepackage{mathrsfs}
\usepackage{amsfonts}
\usepackage{enumitem}
\usepackage{hyperref}
\usepackage{dsfont}
\usepackage[dvipsnames]{xcolor}
\usepackage{orcidlink}

\newcommand{\Bcal}{\mathcal{B}}

\newcommand{\Ecal}{\mathcal{E}}
\newcommand{\Fcal}{\mathcal{F}}
\newcommand{\Gcal}{\mathcal{G}}
\newcommand{\Hcal}{\mathcal{H}}

\newcommand{\Kcal}{\mathcal{K}}

\newcommand{\Rcal}{\mathcal{R}}

\newcommand{\Ebb}{\mathbb{E}}

\newcommand{\Pbb}{\mathbb{P}}

\newcommand{\Rbb}{\mathbb{R}}

\renewcommand{\d}[1]{\, \mathrm{d} #1}

\newcommand{\FctT}{\lbrace \mathcal{F}_t \rbrace_{t \in [0, T]}}
\newcommand{\per}{\rm{per}}
\newcommand{\Id}{\operatorname{Id}}

\newtheorem{theorem}{Theorem}[section]
\newtheorem{lemma}[theorem]{Lemma}

\newtheorem{definition}[theorem]{Definition}

\theoremstyle{definition}
\newtheorem{example}[theorem]{Example}
\newtheorem{remark}[theorem]{Remark}

\numberwithin{equation}{section}

\begin{document}

\title{Absolute continuity of Rosenblatt measures}

\author{Petr \v{C}oupek\orcidlink{0000-0002-5360-6095}}
\address{Charles University, Faculty of Mathematics and Physics, Sokolovsk\'{a} 83, 186 75, Prague 8, Czech Republic
}
\email{coupek@karlin.mff.cuni.cz}

\author{Tyrone E. Duncan}
\address{University of Kansas, Department of Mathematics, Nichols Hall, 2335 Irving Hill Rd, Lawrence, KS 66045, USA.}
\email{teduncan@ku.edu}

\author{Bozenna Pasik-Duncan}
\address{University of Kansas, Department of Mathematics, Nichols Hall, 2335 Irving Hill Rd, Lawrence, KS 66045, USA.}
\email{bozenna@ku.edu}

\author{Jakub Slav\'{i}k\orcidlink{0000-0001-6465-663X}}
\address{Czech Academy of Sciences, Institute of Information Theory and Automation\protect\\Pod~Vod\'{a}renskou v\v{e}\v{z}\'{i} 4, 182 00 Prague 8, Czech Republic}
\email{slavik@utia.cas.cz}

\begin{abstract}
In the article, we address the problem of absolute continuity of translated Rosenblatt measures on the path space. In [\v{C}oupek, P., K\v{r}\'{i}\v{z}, P., Maslowski, B., \emph{Stoch.\ Proc.\ Appl.} \textbf{179} (2025) art.\ no.\ 104499], it is shown that there is no probability measure that would be equivalent to the original probability measure and under which a Rosenblatt path with a linear drift would again be a Rosenblatt path. Here, we show that if the Rosenblatt path is shifted in a direction belonging to a class of nontrivial Gaussian variables (that consists of a deterministic shift and a Wiener integral with respect to a fractional Brownian motion with a related Hurst parameter), such a measure exists. We also give several examples to demonstrate the scope of the result.
\end{abstract}

\keywords{Rosenblatt process, fractional Brownian motion, Girsanov theorem, quasi-invariance}

\subjclass[2020]{60G22}

\thanks{\emph{Funding.} PČ and JS were supported by the Czech Science Foundation project no.\ 26-21423S}

\maketitle

\section{Introduction}

Absolute continuity of probability measures has been long studied. Among the first contributions is that of Cameron and Martin \cite{CamMar44}, who identified the space of shifts for which the translated Wiener measure is equivalent to the original one. This result was significantly broadened by Feldman \cite{Fel58} and H\'ajek \cite{Haj58} who established the now-classical dichotomy that two Gaussian measures are either equivalent or singular, and characterized the equivalence. In the specific case where the reference measure is the Wiener measure, Shepp \cite{She66} provided a characterization of this equivalence in terms of the mean and covariance of the second measure. This result can be interpreted as allowing (possibly noncausal) transformations relating equivalent processes. It was later realized by Hitsuda \cite{Hit68} and Duncan \cite{Dun70} that such equivalence can be realized via causal transformations.
Another generalization of Cameron and Martin's result was discovered by Girsanov \cite{Gir60}, who considered stochastic shifts. Since then, many generalizations of these result were found but only a few in genuinely non-Gaussian and non-semimartingale settings, e.g., \cite{RenYorZam08} for Dirichlet or gamma processes. In this article, we investigate absolute continuity of translated Rosenblatt processes. 

Rosenblatt processes \cite{Ros61, Taq75} form a family of stochastic processes that can be viewed as the closest non-Gaussian alternative to the family of fractional Brownian motions (abbr.\ FBM)---while both families consist of continuous centered self-similar processes with long-range memory and stationary increments, FBMs live in the first Wiener chaos of the Wiener integral with respect to the standard Wiener process and Rosenblatt processes live in the second Wiener chaos.

Rosenblatt processes arise quite naturally in a multitude of scenarios where some form of long-range dependence is present. For example, they appear in the non-central limit theorem for suitably normalized sums of strongly dependent random variables \cite{DobMaj79, LeoAhn01} or as the asymptotic distribution of the unit root statistic with errors being nonlinear transformations of long-range dependent linear processes \cite{Wu06}.

An exposition of the construction and basic properties of Rosenblatt processes can be found in the article \cite{Taq11}, a stochastic calculus for these processes is treated in \cite{CouDunDun22, Tud08}, and some of their finer properties are given in, e.g., \cite{AbrPip06, Alb98, DavKer23, DawLoo22, GarTorTud12, KerNouSakVii21, LeoPep25, Pip04}. Of course, as Rosenblatt processes belong to the more general family of Hermite processes, some additional properties shared by this class can be also found in \cite{Aya20, AyaHamLoo25, AyaTud24, BaiTaq14, CouOnd24, LooTud25, MaeTud07, MorOod86, PipTaq10}. Moreover, because Rosenblatt processes can prove useful in situations where a Gaussian model is not suitable, as in, e.g., \cite{ChaRoi20, CheEtAl19, Dom15, SkaMabSch13, SkaMabSch14, WitGotLanFraGreHei19}, they have been also considered as driving noises for stochastic (partial) differential equations (abbr.\ S(P)DE) in, e.g., \cite{BonTud11, CouMasOnd18, CouMasOnd22, SlaTud19, SlaTud19b, SlaTud19c}. Moreover, statistical inference for SDEs driven by Rosenblatt processes has also been addressed in several papers, see, e.g., \cite{AssTud20, AyaTud24, ChrTudVie09, ChrTudVie11, LooTud25, NouTra19, TudVie09}.

In particular, in the recent paper \cite{CouKriMas25}, an estimator for the drift parameter of an SDE with an additive Rosenblatt process based on a discretely observed sample path of the solution with a fixed time horizon is proposed. It is then shown that the estimator is consistent under the in-fill asymptotics, i.e.\ that the estimator is able to identify the true value of the drift parameter when the number of points at which the solution is observed increases to infinity. It is well-known that such estimation is impossible if the SDE is driven by the FBM because of its Girsanov-type theorem \cite{Bal17, DecUst99, NorValVir99}. The existence of such an estimator in the Rosenblatt case implies that the laws corresponding to different drifts are singular. It is then suggested in the article that a ``Girsanov-type theorem cannot hold true in general for the (drifted) Rosenblatt process''.

The aim of the present article is to explore the rather surprising situation. In our main result, we show that Rosenblatt process is quasi-invariant with respect to shifts that belong to a certain class of nontrivial Gaussian random variables. In particular, we show that if $(R^H_t)_{t\in [0,T]}$ is the Rosenblatt process with Hurst index $H\in (1/2,1)$ and $\theta$ is a deterministic function that belongs to the Bessel potential space $H^{\frac{H}{2},2}([0,T])$, then there exists a probability measure that is absolutely continuous with respect to the original probability measure and such that the process $(\tilde{R}_t^H)_{t\in [0,T]}$ defined by
	\begin{equation}
	\label{eq:RP_changed}
		\tilde{R}_t^H = R_t^H + 2d_H \int_0^t \theta_u\d{B}_y^{\frac{H}{2} + \frac{1}{2}} + d_H \int_0^t \theta_u^2 \d{u}, \quad t\in [0,T],
	\end{equation}
is again a Rosenblatt process with the Hurst index $H$ with respect to this new measure. Here, $B^{\frac{H}{2} + \frac12}$ is the FBM with Hurst index $\tfrac{H}{2} + \tfrac12$ and $d_H\in (0,\infty)$ is a constant given by formula \eqref{eq:dH}. This result is consistent with the result of \cite{CouKriMas25}: While processes of the form 
	\[ 
		X_t' = R_t^H + t, \quad t\in [0,T],
	\]
cannot be viewed as Rosenblatt processes under a change of measure, processes of the form 
	\[ 
		X_t'' = R_t^H + 2d_H^{-\frac{1}{2}} B_{t}^{\frac{H}{2}+ \frac{1}{2}} + t, \quad t\in [0,T],
	\]
can. Moreover, it is interesting to note that processes of the form \eqref{eq:RP_changed} have a similar structure to those that naturally appear in the It\^o formula for Rosenblatt processes in \cite{CouDunDun22}.

The article is organized as follows. In Section \ref{sect:preliminaries}, the necessary preliminaries on fractional integrals and derivatives as well as on FBMs and Rosenblatt processes are given. The main result is stated and proved in Section \ref{sect:girsanov_rp} and is followed by several examples that demonstrate its scope.

\section{Preliminaries}
\label{sect:preliminaries}

Our main tool for establishing the main result is the classical Girsanov theorem for Wiener processes, which is given in Section \ref{sect:classical_girsanov}. Section \ref{sect:fracitonal_calculus} gathers basic properties of Riemann--Liouville fractional integrals that are essential for the analysis of FBMs and Rosenblatt processes. Some basic results on FBMs and Rosenblatt processes are given in Sections \ref{sect:fbm} and \ref{sect:rp}, respectively. In the whole article, $T\in (0,\infty)$ and $(\Omega, \Fcal, \FctT, \Pbb)$ is a filtered probability space.

\subsection{Classical Girsanov theorem}
\label{sect:classical_girsanov}

Let us recall the classical Girsanov theorem for the Wiener process, see, e.g., \cite[Theorem 12.1]{Bal17}, as we will rely on it in the proof of the main theorem.

\begin{theorem}
	\label{thm:girsanov_wiener}
	Let $(B_t)_{t \in [0, T]}$ be an $\FctT$-Wiener process. Let $\phi: [0, T] \times \Omega \to \Rbb$ be an $\FctT$-progressively measurable process such that $\phi \in L^2([0, T])$ almost surely and assume that the process $(Z_t)_{t\in [0,T]}$ defined by
	\begin{equation}
		\label{eq:doleans-dane_exponential}
		Z_t = \exp\left( - \int_0^t \phi_s \d{B_s} - \frac12 \int_0^t \phi^2_s \d{s} \right), \quad t \in [0, T],
	\end{equation}
	is an $\FctT$-martingale. Let $\tilde{\Pbb}$ be the probability measure on $(\Omega, \Fcal)$ defined by $\d{\tilde{\Pbb}} = Z_T \d{\Pbb}$. Then the process $(\tilde{B}_t)_{t \in [0, T]}$ defined by
	\[
		\tilde{B}_t = B_t + \int_0^t \phi_s \d{s}, \quad t \in [0, T],
	\]
	is an $\FctT$-Wiener process with respect to $\tilde{\Pbb}$.
\end{theorem}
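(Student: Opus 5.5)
We would prove the statement via Lévy's characterization of the Wiener process: it suffices to show that $\tilde{B}$ is a continuous $\FctT$-local martingale under $\tilde{\Pbb}$ with quadratic variation $\langle \tilde{B}\rangle_t = t$. First, some preliminary observations. Since $\phi \in L^2([0,T])$ almost surely, the stochastic integral $\int_0^t \phi_s \d{B_s}$ is well defined as a continuous local martingale. Hence $Z_T>0$ almost surely. Since $Z$ is assumed to be a martingale with $Z_0=1$, we have $\Ebb Z_T = 1$. Therefore $\tilde{\Pbb}$ is a probability measure equivalent to $\Pbb$, and $\tilde{\Pbb}|_{\Fcal_t} = Z_t \d{\Pbb}|_{\Fcal_t}$ for every $t\in[0,T]$. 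By Itô's formula, $Z$ solves $\d{Z_t} = -Z_t\phi_t \d{B_t}$.

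The key step is the abstract Bayes rule: an adapted continuous process $X$ is a $\tilde{\Pbb}$-local martingale if $XZ$ is a $\Pbb$-local martingale. For genuine martingales this is the identity $\tilde{\Ebb}[X_t\mid\Fcal_s] = Z_s^{-1}\Ebb[X_tZ_t\mid\Fcal_s]$. For the local version, we take stopping times $\tau_n$ that localize $XZ$ (and, if needed, also make $X$ and $\int \phi^2$ bounded). We then apply the martingale identity to the stopped processes, using optional stopping for the martingale $Z$ to get $\tilde{\Pbb}|_{\Fcal_{\tau_n\wedge t}} = Z_{\tau_n\wedge t}\d{\Pbb}$. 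Since $\tau_n\uparrow T$ (with $\tau_n = T$ eventually) $\Pbb$-a.s., the same holds $\tilde{\Pbb}$-a.s. by equivalence of the measures. We expect this localization bookkeeping to be the main technical point. It is where the martingale (rather than merely local martingale) property of $Z$ is used; without it, $\tilde{\Pbb}$ would not even be a probability measure.

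Next we apply Itô's product rule to $\tilde{B}Z$:
\[
\d{(\tilde{B}_tZ_t)} = \tilde{B}_t\d{Z_t} + Z_t\d{B_t} + Z_t\phi_t\d{t} + \d{\langle B, Z\rangle_t}
= \tilde{B}_t\d{Z_t} + Z_t\d{B_t},
\]
since $\d{\langle B,Z\rangle_t} = -Z_t\phi_t\d{t}$ cancels the drift term. Thus $\tilde{B}Z$ is a continuous $\Pbb$-local martingale, so $\tilde{B}$ is a continuous $\tilde{\Pbb}$-local martingale by the previous step.

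Finally, we compute the quadratic variation. Under $\Pbb$, sums of squared increments of $\tilde{B}$ along partitions with mesh going to $0$ converge in probability to $t$. The term $\int_0^t\phi_s\d{s}$ is continuous and of bounded variation, so it contributes nothing, either alone or through cross terms. Convergence in probability is preserved under the equivalent measure $\tilde{\Pbb}$. Hence $\langle\tilde{B}\rangle_t = t$ under $\tilde{\Pbb}$, and Lévy's characterization yields that $\tilde{B}$ is an $\FctT$-Wiener process under $\tilde{\Pbb}$.
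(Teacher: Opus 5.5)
Your proof is correct. There is nothing in the paper to compare it with: the paper does not prove this statement, but quotes it from \cite[Theorem 12.1]{Bal17} and uses it as a black box in the proofs of Theorems \ref{thm:girsanov_fbm} and \ref{thm:girsanov_rosenblatt}.

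What you give is the standard self-contained proof, and each step holds:
\begin{itemize}
\item The martingale hypothesis gives $\Ebb Z_T=1$, $\tilde{\Pbb}\sim\Pbb$ and $\tilde{\Pbb}|_{\Fcal_t}=Z_t\d{\Pbb}|_{\Fcal_t}$.
\item It\^o's product rule shows that the drift $Z_t\phi_t\d{t}$ is cancelled exactly by $\d{\langle B,Z\rangle_t}=-Z_t\phi_t\d{t}$. Hence $\tilde{B}Z$ is a continuous $\Pbb$-local martingale.
\item The localized Bayes rule transfers this to $\tilde{B}$ being a $\tilde{\Pbb}$-local martingale.
\item The quadratic variation is a limit in probability, so it is unchanged because $\tilde{\Pbb}\ll\Pbb$.
\item L\'evy's characterization concludes.
\end{itemize}

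The localization step you flagged needs one extra line. For $s<t$ and $A\in\Fcal_s$, one does not in general have $A\in\Fcal_{t\wedge\tau_n}$. To handle this, condition on $\Fcal_\rho$ with $\rho=(t\wedge\tau_n)\vee s$; optional sampling gives $\Ebb[Z_t\mid\Fcal_\rho]=Z_\rho$. Then split over $\{\tau_n<s\}$ and $\{\tau_n\geq s\}$, using that $\tilde{B}_{\tau_n}\mathds{1}_{\{\tau_n<s\}}$ is $\Fcal_s$-measurable. This yields
\[
\tilde{\Ebb}[\tilde{B}_{t\wedge\tau_n}\mid\Fcal_s]=\tilde{B}_{s\wedge\tau_n},
\]
where $\tilde{\Ebb}$ denotes expectation under $\tilde{\Pbb}$.

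Alternatively, you may invoke the standard lemma directly: for a strictly positive martingale density process $Z$, a continuous adapted $X$ is a $\tilde{\Pbb}$-local martingale if and only if $XZ$ is a $\Pbb$-local martingale.
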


It is well-known that the Dol\'eans--Dade exponential $(Z_t)_{t\in [0,T]}$ defined in \eqref{eq:doleans-dane_exponential} is an $\FctT$-martingale if and only if
\[
	\Ebb Z_t = 1 \quad \text{for all} \ t \in [0, T],
\]
see e.g.\ \cite[Theorem III.5.2]{IkeWat89}. Let us also recall the Novikov condition \cite{Nov72}: If
\begin{equation}
	\label{eq:novikov}
	\Ebb \exp\left( \frac12 \int_0^T |\phi_s|^2 \d{s} \right) < \infty,
\end{equation}
then $(Z_t)_{t\in [0,T]}$ is an $\FctT$-martingale.

\subsection{Fractional calculus}
\label{sect:fracitonal_calculus}

In what follows, we recall some auxiliary results on fractional integrals and derivatives that will be needed for the analysis contained in the subsequent sections.

\begin{definition}
	Let $\alpha > 0$ and $f \in L^1([0, T])$. The \emph{left} and \emph{right  Riemann--Liouville fractional integrals of $f$ of order $\alpha$} are defined by
	\begin{align*}
		(I^\alpha_{0+} f)(x) &= \frac{1}{\Gamma(\alpha)} \int_0^x (x-y)^{\alpha-1} f(y) \d{y},
		\\
		(I^\alpha_{T-} f)(x') &= \frac{1}{\Gamma(\alpha)} \int_{x'}^{T} (y-x')^{\alpha-1} f(y) \d{y},
	\end{align*}
	respectively, for almost all $x,x' \in [0, T]$.
\end{definition}

We summarize basic properties of the Riemann--Liouville fractional integrals. For brevity, we explicitly discuss only the left fractional integral. For $\alpha > 0$, it holds that
\[
	\| I^\alpha_{0+} f \|_{L^p([0, T])} \leq \frac{T^\alpha}{\alpha \Gamma(\alpha)} \| f \|_{L^p([0, T])},
\]
see e.g.\ \cite[Eq.\ (2.72)]{SamKilMar93}. In other words, $I^\alpha_{0+}: L^p([0, T]) \to L^p([0, T])$ is a bounded operator. Moreover, it can be shown that,
for $1<p<\infty$ and $0<\alpha<1/p$, the identity $I^\alpha_{0+}(L^p([0, T])) = H^{\alpha,p}([0, T])$ holds, see e.g.\ \cite[Theorem 18.3]{SamKilMar93}, where $H^{\alpha,p}([0, T])$ denotes the Bessel potential space. By the standard embedding of the spaces $H^{\alpha, p}([0, T]) = F^\alpha_{p,2}([0, T])$ into $L^{p/(1-\alpha p)}([0, T]) = F^0_{p/(1-\alpha p), 2}([0, T])$ from e.g.\ \cite[Section 3.3.1]{Tri83}, the celebrated result by Hardy and Littlewood \cite[Theorem 4]{HarLit28}, that is the boundedness of the operator $I^\alpha_{0+}: L^p([0, T]) \to L^{p/(1-\alpha p)}([0, T])$, follows. For $1/p < \alpha < 1/p + 1$, it holds that $I^\alpha_{0+}(L^p([0, T])) = H^{\alpha,p}_0([0, T])$, where $H^{\alpha,p}_0([0, T])$ is the space of all functions $f \in H^{\alpha,p}([0, T]) \subseteq C([0, T])$ such that $f(0) = 0$.

Let $p, q \in [1, \infty)$ be such that $p^{-1}+q^{-1} \leq 1+\alpha$ with $p, q \neq 1$ if $p^{-1}+q^{-1} = 1+\alpha$. For $f \in L^p([0,T])$ and $g \in L^q([0, T])$, the fractional integration by parts
\begin{equation}
	\label{eq:integration_by_parts_fractional}
	\int_0^T f(u) (I^\alpha_{0+} g)(u) \d{u} = \int_0^T (I^\alpha_{T-} f)(u)g(u) \d{u}
\end{equation}
holds, see e.g.\ \cite[Eq.\ (2.20)]{SamKilMar93}.

\begin{definition}
	Let $\alpha \in (0, 1)$ and $f: [0, T] \to \Rbb$. The \emph{left} and \emph{right Riemann--Liouville fractional derivatives of $f$ of order $\alpha$} are defined by
	\begin{align*}
		(I^{-\alpha}_{0+} f)(x) &= \frac{1}{\Gamma(1-\alpha)} \frac{\d{}}{\d{x}} \int_0^x (x-y)^{-\alpha} f(y) \d{y},
		\\
		(I^{-\alpha}_{T-} f)(x') &= \frac{1}{\Gamma(1-\alpha)} \frac{\d{}}{\d{x'}} \int_{x'}^{T} (y-x')^{-\alpha} f(y) \d{y},
	\end{align*}
	for almost all $x,x' \in [0, T]$.
\end{definition}

The notation $I^{-\alpha}_{0+}$ and, similarly, $I^{-\alpha}_{T-}$ is justified by the fact that
\begin{equation}
	\label{eq:negative_integral}
	I^{-\alpha}_{0+} I^{\alpha}_{0+} \phi = \phi \text{ for } \phi \in L^1([0, T]) \quad \text{and} \quad I^{\alpha}_{0+} I^{-\alpha}_{0+} f = f \text{ for } f \in H^{\alpha,1}([0, T]),
\end{equation}
see e.g.\ \cite[Theorem 2.4]{SamKilMar93}.

In what follows, the operator
\[
	x^{-\alpha-\eta} (I^\alpha_{0+} (\bullet^\eta f))(x) = \frac{x^{-\alpha-\eta}}{\Gamma(\alpha)} \int_0^x (x-y)^{\alpha-1} y^\eta f(y) \d{y},
\]
where $\alpha>0$, $p \in [1, \infty]$ and $\eta > -1/p'$ will play an important role. This operator is often called the \emph{left Kober--Erd\'{e}lyi fractional integral of orders $\alpha$ and $\eta$}. By \cite[p.\ 323]{SamKilMar93}, the operator $\bullet^{-\alpha-\eta} (I^\alpha_{0+} (\bullet^\eta f): L^p([0,T]) \to L^p([0,T])$ is bounded for $1 \leq p < \infty$, see also \cite[Theorem 2]{Erd40}. In the special case $\eta=-\alpha$, the operator
\[
	(I^\alpha_{0+} (\bullet^{-\alpha} f))(x) = \frac{1}{\Gamma(\alpha)} \int_0^x (x-y)^{\alpha-1} y^{-\alpha} f(y) \d{y}
\]
with $\alpha \in (0, 1/2)$ is bounded on $L^p([0, T])$ for all $p > 1/(1-\alpha)$ and, in particular, for $p=2$ for all $\alpha \in (0, 1/2)$. We will also need the boundedness of the weighted Kober--Erd\'elyi fractional integral.

\begin{lemma}
	\label{lemma:kober-erdelyi_bounded}
	Let $\alpha \in (0, 1/2)$ and $p > 1/(1-\alpha)$.
	\begin{enumerate}
		\item The operator
		\[
			F: \phi \mapsto \bullet^{\alpha} I^{\alpha}_{0+}( \bullet^{-\alpha} \phi)
		\]
		from $L^p([0, T])$ to $H^{\alpha, p}([0, T])$ is surjective and bounded.
		\item The inverse operator $F^{-1}$ from $H^{\alpha,p}([0, T])$ to $L^p([0, T])$ is given by $f \to \bullet^{\alpha} I^{-\alpha}_{0+}(\bullet^{-\alpha} f)$.
	\end{enumerate}
\end{lemma}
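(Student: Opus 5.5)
We reduce the lemma to the unweighted facts recalled above: the Kober--Erd\'elyi boundedness on $L^p$, the range identity $I^\alpha_{0+}(L^p([0,T]))=H^{\alpha,p}([0,T])$ for $0<\alpha<1/p$, and the inversion formulas \eqref{eq:negative_integral}. Write $F=M_{\alpha}\circ I^\alpha_{0+}\circ M_{-\alpha}$, where $M_\beta$ is multiplication by $\bullet^\beta$. Note that $p>1/(1-\alpha)$ means $1/p<1-\alpha$. Since $\alpha<1/2$, this does not by itself force $\alpha<1/p$, so the range identity may not apply directly. The cleaner route is to work with the operator itself and not through ranges.

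\emph{Boundedness into $L^p$.} We rewrite
\[
F\phi(x)=x^{\alpha}(I^\alpha_{0+}(\bullet^{-\alpha}\phi))(x).
\]
The special case $\eta=-\alpha$ recalled above says that $I^\alpha_{0+}(\bullet^{-\alpha}\,\cdot)$ is bounded on $L^p$ for $p>1/(1-\alpha)$. Multiplication by $x^\alpha\le T^\alpha$ is bounded on $L^p$, so $F$ is bounded on $L^p$.

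\emph{Smoothness.} To get into $H^{\alpha,p}$, we show that $\psi:=I^{-\alpha}_{0+}(\bullet^{-\alpha}F\phi)$, formally $\phi\,\bullet^{-\alpha}$, is controlled. Equivalently, $F\phi=I^\alpha_{0+}g$ with $g\in L^p$ plus lower-order terms. For this we use the commutator identity
\[
x^\alpha I^\alpha_{0+}(y^{-\alpha}\phi)=I^\alpha_{0+}\phi+\frac{1}{\Gamma(\alpha)}\int_0^x (x-y)^{\alpha-1}\Big(\big(\tfrac{x}{y}\big)^{\alpha}-1\Big)\phi(y)\d{y}.
\]
The correction kernel is homogeneous of degree $\alpha-1$ in $(x,y)$. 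We would show that it equals $I^\alpha_{0+}$ applied to a Hardy-type (Kober) operator of $\phi$. Applying $I^{-\alpha}_{0+}$ to $x^\alpha I^\alpha_{0+}(y^{-\alpha}\phi)$ yields $\phi+K\phi$, where $K$ is a homogeneous degree-$(-1)$ kernel operator. Such an operator is bounded on $L^p$ by Schur/Hardy's inequality, with constant $\int_0^1 k(t)t^{-1/p}\d{t}<\infty$. Finiteness of this constant is exactly where $p>1/(1-\alpha)$ enters, via integrability of $t^{-\alpha-1/p}$ near $0$. We then get $F\phi=I^\alpha_{0+}(\phi+K\phi)\in I^\alpha_{0+}(L^p)$, and the latter sits in $H^{\alpha,p}$ with norm bound; this is the Bessel-potential characterization from \cite[Theorem 18.3]{SamKilMar93}, including its extension with the condition $f(0)=0$. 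This gives boundedness. Computing $K$ explicitly, or at least its Mellin transform, is the main obstacle.

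\emph{Inverse and surjectivity.} Given $f\in H^{\alpha,p}$, set $G f=\bullet^{\alpha}I^{-\alpha}_{0+}(\bullet^{-\alpha}f)$. First, $f\in H^{\alpha,p}$ implies $\bullet^{-\alpha}f\in I^\alpha_{0+}$-range with an $L^p$ preimage. This follows by the symmetric Hardy-type argument, where the Mellin symbol of $F$ (a ratio of Gamma functions nonvanishing on the line $\Re s=1/p$) is inverted. So $Gf\in L^p$ is well defined. By \eqref{eq:negative_integral},
\[
FGf=\bullet^\alpha I^\alpha_{0+}I^{-\alpha}_{0+}(\bullet^{-\alpha}f)=f,
\qquad
GF\phi=\bullet^\alpha I^{-\alpha}_{0+}I^\alpha_{0+}(\bullet^{-\alpha}\phi)=\phi,
\]
using that $\bullet^{-\alpha}\phi\in L^1$ since $p>1/(1-\alpha)$. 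Hence $F$ is a bijection with inverse $G$, which proves both items.
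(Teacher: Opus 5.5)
Your strategy is sound, and your predictions about $K$ (a degree-$(-1)$ Hardy-type operator whose kernel behaves like $t^{-\alpha}$ at $0$) turn out to be correct. But the proposal stops exactly where the proof has to happen. Write $F\phi(x)=\frac{x^{\alpha}}{\Gamma(\alpha)}\int_0^1(1-t)^{\alpha-1}t^{-\alpha}\phi(xt)\d{t}$ and test on powers. This gives $F(x^s)=I^{\alpha}_{0+}\bigl(m(s)x^s\bigr)$ with $m(s)=\Gamma(s+1-\alpha)\Gamma(s+1+\alpha)/\Gamma(s+1)^2$, so $K$ must act by $x^s\mapsto(m(s)-1)x^s$. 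You never show that this map is given by a kernel $k$ on $(0,1)$ with $\int_0^1|k(t)|t^{-1/p}\d{t}<\infty$. Nor do you show that $\Id+K$ is invertible on $L^p([0,T])$, which is what surjectivity amounts to. A Gamma ratio that does not vanish on one line does not by itself give an inverse that is bounded on $L^p$ for $p\neq 2$ and again of Volterra type.

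The intermediate claim that $\bullet^{-\alpha}f$ has an $L^p$ preimage under $I^{\alpha}_{0+}$ is also false. For $f\equiv 1$ the preimage is $\frac{\Gamma(1-\alpha)}{\Gamma(1-2\alpha)}x^{-2\alpha}$, which is not in $L^2$ when $\alpha\ge\frac14$; what is needed is a preimage in $\bullet^{-\alpha}L^p$. Finally, the issue you flagged cannot be argued away. If $\alpha>1/p$, H\"older's inequality gives $|F\phi(x)|\lesssim x^{\alpha-1/p}\|\phi\|_{L^p(0,x)}$, so $F\phi(0)=0$ and constants are not in the range. Surjectivity onto $H^{\alpha,p}$ therefore holds only for $\alpha<1/p$. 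The paper's proof carries the same tacit restriction when it identifies $I^{\alpha}_{0+}(L^p)$ with $H^{\alpha,p}$; this is harmless because the lemma is used only with $p=2$.

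Within your framework the gap can be closed with Gauss's summation formula.
\begin{itemize}
\item \emph{Forward direction.} We have $m(s)={}_2F_1(\alpha,\alpha;s+1+\alpha;1)$ and, for $n\ge 1$, $1/(s+1+\alpha)_n=\frac{1}{(n-1)!}\int_0^1 t^{s+\alpha}(1-t)^{n-1}\d{t}$. Hence $I^{-\alpha}_{0+}F\phi=\phi+K\phi$ with $K\phi(x)=\int_0^1 k(t)\phi(xt)\d{t}$ and $k(t)=\alpha^2 t^{\alpha}\,{}_2F_1(1+\alpha,1+\alpha;2;1-t)\ge 0$. This kernel is bounded near $t=1$ and $\sim c\,t^{-\alpha}$ near $0$. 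Minkowski's inequality then gives $\|K\|_{L^p\to L^p}\le\int_0^1 k(t)t^{-1/p}\d{t}=m(-1/p)-1$, which is finite precisely when $p>1/(1-\alpha)$.
\item \emph{Inverse.} Likewise $1/m(s)={}_2F_1(\alpha,-\alpha;s+1;1)$ yields $(\Id+K)^{-1}=\Id+\tilde K$ with kernel $\tilde k(t)=-\alpha^2\,{}_2F_1(1+\alpha,1-\alpha;2;1-t)$, which is only logarithmically singular at $0$. The kernel identities follow from injectivity of the Mellin transform on $L^1((0,1),t^{-1/p}\d{t})$.
\item \emph{Surjectivity and $G$.} For $f=I^{\alpha}_{0+}g\in H^{\alpha,p}$ with $\alpha<1/p$, the function $\phi=(\Id+\tilde K)g\in L^p$ solves $F\phi=f$. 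Then $Gf=\phi$ by the first identity in \eqref{eq:negative_integral}, so $FG=\Id$ comes for free.
\end{itemize}

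The paper instead cites Samko--Kilbas--Marichev (Theorem 10.4) for the bounded bijection $L^p\to I^{\alpha}_{0+}(L^p)$. It then uses the bounded inverse theorem, and Sickel's pointwise multiplier theorem ($\bullet^{-\alpha}f\in H^{\alpha,1}$) to verify $FG=\Id$. A completed version of your route would be self-contained and would show exactly where $p>1/(1-\alpha)$ is used. As submitted, however, its central step is only announced.
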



\begin{proof}
	By \cite[Theorem 10.4, case 1, and Eq.\ (10.33)]{SamKilMar93}, the operator
	\[
		\prescript{}{1}I^c_{0+}(a, c) \phi = \bullet^{-a} I^c_{0+} (\bullet^{a} \phi)
	\]
	is bounded from $L^p([0, T])$ to $I^c_{0+}(L^p([0, T]))$ and is surjective for $\Re c > 0$ and $p(1-\Re a)>1$. Hence, the first claim follows by setting $a = -\alpha$ and $c=\alpha$, in other words $F = \prescript{}{1}I^\alpha_{0+}(-\alpha, \alpha)$.
	
	Regarding the second claim, we note that by \cite[Lemma 2.5]{SamKilMar93}, the operator $F$ is surjective. Thus, the inverse operator $F^{-1}: H^{\alpha,p}([0, T]) \to L^p([0, T])$ is bounded by the bounded inverse theorem. It remains to check that
	\[
		G f = \bullet^{\alpha} I^{-\alpha}_{0+}(\bullet^{-\alpha} f), \quad f \in H^{\alpha, p}([0, T]),
	\]
	is indeed the inverse operator $F^{-1}$. The identity $G F = \Id$ follows immediately from the first claim in \eqref{eq:negative_integral} and $\phi \in L^p([0, T])$. The remaining identity $F G = \Id$, follows from $\bullet^{-\alpha} \in F^{s'}_{p',q'}([0, T])$ for $0 < q' < \infty$ and $\alpha < s' - 1/p'$. Then, by \cite[Theorem 3]{Sic93},
	$\bullet^{-\alpha} f \in F^\alpha_{1,2}([0, T]) = H^{\alpha, 1}([0, T])$ and the proof is concluded by the second claim of \eqref{eq:negative_integral}.
\end{proof}

\subsection{Fractional Brownian motion}
\label{sect:fbm}

Let $H\in (1/2,1)$ and recall that the FBM with Hurst index $H$ is the centered Gaussian stochastic process with covariance function $R_H:[0,T]^2\to\Rbb$ given by 
	\begin{equation}
	\label{eq:cov_FBM} 
		R_H(s,t) = \frac{1}{2} \left( s^{2H} + t^{2H} - |t-s|^{2H}\right), \quad s,t\in [0,T].
	\end{equation}
(Of course, the definition makes sense for $H\in (0,1)$ but due to our focus on the Rosenblatt process, we restrict ourselves to the range of $H\in (1/2,1)$.) Let us recall the representation of the FBM as a Volterra-type process (i.e.\ as a stochastic convolution integral of a deterministic kernel with respect to a standard Wiener process), see, e.g., \cite[Theorem 5.2]{NorValVir99}. In particular, let $(B_t)_{t\in[0,T]}$ be a standard $\FctT$-Wiener process and denote for $t\in [0,T]$ the kernel $K_H(t,\bullet): (0,T)\to\Rbb$ by
	\[
		K_H(t,s) = c_H \mathds{1}_{[0,t]}(s) s^{\frac{1}{2} - H} \int_s^t u^{H-1/2} (u-s)^{H-\frac{3}{2}} \d{u}, \quad s\in (0,T),
	\]
	
where
\begin{equation}
	\label{eq:c_H}
	c_H = \left( \frac{H(2H-1)}{\mathrm{B}\left(2-2H, H-\tfrac{1}{2}\right)}\right)^{1/2}.
\end{equation}
Define the process $(B^H_t)_{t\in [0,T]}$ by
\begin{equation}
	\label{eq:fbm_representation}
	B^H_t = \int_0^t K_H(t,s) \d{B_s}, \quad t\in [0,T].
\end{equation}
It is straightforward to check that process $B^H$ is indeed an FBM adapted to $\FctT$.

\subsubsection{Girsanov theorem for FBM} In what follows, we recall a Girsanov-type theorem for the FBM from \cite[Theorem 4.9]{DecUst99}, see also \cite{NorValVir99}. To this end, we define the operator $K_H$ by
	\[
		(K_H f)(t) = \int_0^t K_H(t,s) f_s \d{s}, \quad t\in [0,T].
	\]
As we have the equality
	\[
		K_H f = c_H \Gamma \left( H-\tfrac12 \right) I^1_{0+}\left( \bullet^{H-\frac{1}{2}} I^{H-\frac{1}{2}}_{0+}( \bullet^{-(H-\frac{1}{2})} f) \right),
	\]
it follows by \cite[p.\ 187]{SamKilMar93} that the map
\[
	K_H: L^p([0,T]) \to I^{H+\frac{1}{2}}_{0+}(L^p([0,T]))
\]
is an isomorphism. In particular, the inverse operator
\[
	K_H^{-1}: I^{H+\frac{1}{2}}_{0+}(L^p([0,T])) \to L^p([0,T])
\]
given by
\[
	K_H^{-1} f = c_H^{-1} \Gamma \left( H-\tfrac12 \right)^{-1} \bullet^{H-\frac{1}{2}} I^{-(H-\frac{1}{2})}_{0+}\left( \bullet^{-(H-\frac{1}{2})} \frac{\d{}}{\d{x}} f \right)
\]
is well-defined, see e.g.\ \cite[Eq.\ (11)]{NuaOuk02} for details. The Girsanov theorem for the FBM from \cite[Theorem 4.9]{DecUst99} (see also \cite{NorValVir99}) can be recalled now.

\begin{theorem}
	\label{thm:girsanov_fbm}
	Let $H \in (1/2, 1)$, let $(B_t)_{t \in [0, T]}$ be an $\FctT$-Wiener process and let $(B^H_t)_{t\in [0,T]}$ be the $FBM$ defined by \eqref{eq:fbm_representation}. Let $\theta: [0, T] \times \Omega \to \Rbb$ be an $\FctT$-progressively measurable process such that 
	\begin{equation}
	\label{eq:cond_gir_fbm}
		\int_0^\bullet \theta_s \d{s} \in I^{H+\frac{1}{2}}_{0+}(L^2([0, T])), \quad \Pbb\text{-a.s.}
	\end{equation}
	Assume that the process $(Z_t)_{t\in [0,T]}$ defined by
	\[
		Z_t = \exp\left[ - \int_0^t K_H^{-1} \left( \int_0^\bullet \theta_r \d{r} \right) (s) \d{B_s} - \frac12 \int_0^t K_H^{-1} \left( \int_0^\bullet \theta_r \d{r} \right)^2(s) \d{s} \right]
	\]
	is an $\FctT$-martingale and let $\tilde{\Pbb}$ be the probability measure on $(\Omega, \Fcal)$ defined by $\d{\tilde{\Pbb}} = Z_T \d{\Pbb}$. Then the process $(\tilde{B}^H_t)_{t \in [0, T]}$ defined by
	\begin{equation}
    \label{eq:fbm_shifted}
		\tilde{B}^H_t = B^H_t + \int_0^t	\theta_s \d{s}, \quad t \in [0, T],
	\end{equation}
	is an FBM with Hurst index $H$ with respect to $\tilde{\Pbb}$ that is adapted to $\FctT$.
\end{theorem}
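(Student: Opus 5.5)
The proof will reduce Theorem~\ref{thm:girsanov_fbm} to the classical Girsanov theorem, Theorem~\ref{thm:girsanov_wiener}, applied to the Wiener process $B$ that drives $B^H$ through \eqref{eq:fbm_representation}. Set $\Theta = \int_0^\bullet \theta_r \d{r}$. By \eqref{eq:cond_gir_fbm} we have $\Theta \in I^{H+\frac12}_{0+}(L^2([0,T]))$ almost surely. The operator $K_H^{-1}$ is an isomorphism onto $L^2([0,T])$, so $\phi := K_H^{-1}\Theta$ is well defined and lies in $L^2([0,T])$ almost surely.

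\textbf{Adaptedness of $\phi$.} We first check that $\phi$ is $\FctT$-progressively measurable. The key is that $K_H^{-1}$ is a Volterra (causal) operator. In the explicit formula, the derivative $\frac{\d{}}{\d{x}}\Theta = \theta$ is progressive. The weights $\bullet^{\pm(H-\frac12)}$ are deterministic multiplications. The left fractional derivative $I^{-(H-\frac12)}_{0+}$ evaluated at $s$ involves only values on $[0,s]$. Hence $\phi_s$ depends only on $(\theta_r)_{r\le s}$. Progressive measurability then follows by a standard approximation argument, or by writing the fractional derivative as a limit of progressive processes. This causality check is the step I expect to need the most care.

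\textbf{Change of measure.} The process $Z$ in the statement is exactly the Doléans--Dade exponential \eqref{eq:doleans-dane_exponential} built from this $\phi$, and it is a martingale by assumption. Theorem~\ref{thm:girsanov_wiener} therefore shows that
\[
\tilde B_t = B_t + \int_0^t \phi_s \d{s}
\]
is an $\FctT$-Wiener process under $\tilde{\Pbb}$.

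\textbf{Identifying $\tilde B^H$.} Define $\tilde B^H_t = \int_0^t K_H(t,s)\d{\tilde B_s}$. Because $\tilde B$ is a Wiener process under $\tilde{\Pbb}$, the representation \eqref{eq:fbm_representation} shows that $\tilde B^H$ is an FBM with Hurst index $H$ under $\tilde{\Pbb}$, adapted to $\FctT$. On the other hand, since $K_H(t,\bullet)\in L^2$ is deterministic, linearity of the stochastic integral gives, $\Pbb$-a.s. (equivalently $\tilde{\Pbb}$-a.s.),
\[
\int_0^t K_H(t,s)\d{\tilde B_s} = B^H_t + \int_0^t K_H(t,s)\phi_s\d{s} = B^H_t + (K_H\phi)(t) = B^H_t + \Theta_t .
\]
This is exactly \eqref{eq:fbm_shifted}.

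\textbf{Remaining check.} The only other point to verify is that $\int_0^t K_H(t,s)\d{\tilde B_s}$ can legitimately be split into the $\d B$ integral plus a Lebesgue integral. This requires $\int_0^t |K_H(t,s)\phi_s|\d{s}<\infty$, which follows from Cauchy--Schwarz since $K_H(t,\bullet)\in L^2$ and $\phi\in L^2$. Finally, continuity in $t$ of both sides lets us pass from "for each $t$ almost surely" to indistinguishability.
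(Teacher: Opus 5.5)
Your proposal is correct and follows the paper's proof: you set $\phi = K_H^{-1}\bigl(\int_0^\bullet \theta_r \, \mathrm{d}r\bigr)$, apply the classical Girsanov theorem to $B$, and use $K_H\phi = \int_0^\bullet \theta_r\,\mathrm{d}r$ to identify $\tilde B^H_t = \int_0^t K_H(t,s)\,\mathrm{d}\tilde B_s$. Your additional checks fill in details the paper leaves implicit: progressive measurability of $\phi$ (via causality of $K_H^{-1}$), and the legitimacy of splitting the integral against $\tilde B$ (via Cauchy--Schwarz and the equivalence of $\Pbb$ and $\tilde{\Pbb}$).
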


\begin{proof}
	It holds that
	\begin{align*}
		\int_0^t \theta_s \d{s} &= \left[ K_H \left( K_H^{-1} \left( \int_0^\bullet \theta_r \d{r} \right) \right) \right] (t)
		\\
		&= \int_0^t K_H(t,s) \d{\left( \int_0^s K_H^{-1}\left( \int_0^{\bullet} \theta_r \d{r} \right)(u) \d{u} \right)}.
	\end{align*}
	Hence, defining
	\begin{equation}
		\label{eq:fbm_psi_theta_definition}
		\phi = K_H^{-1}\left( \int_0^\bullet \theta_r \d{r} \right) = c_H^{-1} \Gamma \left( H-\tfrac12 \right)^{-1} \bullet^{H-\frac{1}{2}} I^{-(H-\frac{1}{2})}_{0+}\left( \bullet^{-(H-\frac{1}{2})} \theta \right),
	\end{equation}
	it holds that
	\[
		\tilde{B}^H_t = \int_0^t K_H(t,s) \left( \d{B_s} + \phi_s \d{s} \right)
	\]
	and the claim follows from Theorem \ref{thm:girsanov_wiener}.
\end{proof}

\begin{remark}
Since $1/2 < H+ 1/2 < 3/2$ holds, we have the equality 
	\[
		I^{H+\frac{1}{2}}_{0+}(L^2([0, T])) = H^{H+\frac{1}{2}, 2}_0([0, T]),
	\] 
and we can rewrite condition \eqref{eq:cond_gir_fbm} as 
	\[
		\theta \in H^{H-\frac{1}{2},2}([0,T]).
	\]
\end{remark}

\subsubsection{Wiener integral with respect to FBM}

Now we briefly recall the definition of the Wiener integral with respect to an FBM. This will be important because the Gaussian shifts in the Girsanov theorem for Rosenblatt processes will be described precisely as certain integrals with respect to an FBM. More details can be found in the monograph \cite{Nua06} and the references therein.

Let $\Ecal([0,T])$ be the set of real-valued step functions defined on the interval $[0,T]$ and consider the operator $\partial_1K_{H,T}^\ast: \Ecal([0,T]) \to L^2([0, T])$ defined by
	\[ 
		\partial_1K_{H,T}^\ast f = \int_\bullet^T (\partial_1K_H)(t,\bullet)f_t \d{t} = c_H \Gamma\left( H - \tfrac12 \right) \bullet^{-(H-\frac{1}{2})} I^{H-\frac{1}{2}}_{T-}(\bullet^{H-\frac{1}{2}} f).
	\] 
For $f \in \Ecal([0,T])$, the integral of $f$ with respect to the FBM $(B^H_t)_{t \in [0, T]}$ is then defined by
	\[ 
		\Ecal([0,T])\ni f = \sum_{i=1}^n F_i \mathds{1}_{[t_{i-1},t_i)} \quad\longmapsto \quad \sum_{i=1}^n F_i (B^H_{t_i}-B^H_{t_{i-1}}) = \int_0^T f_t \d{B^H}_t \in L^2(\Omega).
	\]
It can be shown that the integral is a linear isometry between $\Ecal([0,T])$ endowed with the norm $\|f\|_{\Hcal([0,T])}= \|\partial_1K_{H,T}^\ast f\|_{L^2([0,T])}$ and a closed linear subspace of $L^2(\Omega)$ endowed with the usual $L^2(\Omega)$-norm. As such, it can be uniquely extended to the completion $\Hcal([0,T])$ of $\Ecal([0,T])$ with respect to the norm $\|\cdot\|_{\Hcal([0,T])}$. Moreover, it holds for $f\in \mathcal{H}([0,T])$ that
	\begin{equation}
	\label{eq:stoch_integral_fbm_definition}
		\int_0^T f_t \d{B^H_t} = \int_0^T (\partial_1K_{H,T}^\ast f)(t) \d{B_t}.
	\end{equation}
Note that the space $\Hcal([0,T])$ was characterized in, e.g., \cite[Proposition 2.6]{CouMasOnd22}.

\subsection{Rosenblatt process}
\label{sect:rp}

Let us recall the definition of the Rosenblatt process. Let $(B_t)_{t\in [0,T]}$ be a standard $\FctT$-Wiener process and let $H \in (1/2, 1)$. For $t \in [0, T]$, define $\Kcal^H_t: (0, T)^2 \to \Rbb$ by
\begin{align*}
	\Kcal^H_t(y_1, y_2) &= d_H \mathds{1}_{[0, t]^2}(y_1,y_2) \int_{y_1 \vee y_2}^t (\partial_1K_{\frac{H}{2}+\frac{1}{2}})(u, y_1) (\partial_1K_{\frac{H}{2}+\frac{1}{2}})(u, y_2) \d{u}
	\\
	&= e_H \mathds{1}_{[0, t]^2}(y_1,y_2) (y_1 y_2)^{-\frac{H}{2}} \int_{y_1 \vee y_2}^t u^{H} (u-y_1)^{\frac{H}{2}-1} (u-y_2)^{\frac{H}{2}-1} \d{u}
\end{align*}
where
\begin{equation}
\label{eq:dH}
	d_H = \frac{1}{H+1} \left( \frac{H}{2(2H-1)} \right)^{-1/2}, \quad e_H = c_{\frac{H}{2} + \frac{1}{2}}^2 d_H,
\end{equation}
and where constant $c_{\frac{H}{2}+\frac12}$ is defined in \eqref{eq:c_H}. By using the Fubini theorem and the equality
\begin{equation}
\label{eq:int_beta}
	\frac{(uv)^{\alpha}}{\mathrm{B}\left( 1-2\alpha, \alpha \right)} \int_0^{u \wedge v} y^{-2\alpha} (u-y)^{\alpha-1} (v-y)^{\alpha-1} \d{y} = |u-v|^{2\alpha-1}, \quad u\neq v,
\end{equation}
that holds for $\alpha \in (0, 1/2)$, one can check that $\Kcal^H_t \in L^2([0, T]^2)$ for all $t \in [0, T]$. The Rosenblatt process $(R^H_t)_{t\in [0,T]}$ with Hurst parameter $H$ can be defined by
\begin{equation}
	\label{eq:rosenblatt_representation}
	R^H_t = 2 \int_0^t \int_0^{y_2} \Kcal^{H}_t (y_1, y_2) \d{B_{y_1}} \d{B_{y_2}}, \quad t\in [0,T],
\end{equation}
see, e.g., \cite[Proposition 1]{Tud08}.

\section{Absolute continuity of Rosenblatt processes}
\label{sect:girsanov_rp}

The main result of the article is given now. Examples are provided below the proof.

\begin{theorem}
	\label{thm:girsanov_rosenblatt}
	Let $H \in (1/2, 1)$ and let $(B_t)_{t \in [0, T]}$ be a standard $\FctT$-Wiener process. Let $(B^{\frac{H}{2} + \frac12}_t)_{t\in [0,T]}$ be the FBM defined by \eqref{eq:fbm_representation} (with $\frac{H}{2} + \frac12$ in place of $H$) and let $(R^H_t)_{t\in [0,T]}$ be the Rosenblatt process defined by \eqref{eq:rosenblatt_representation}. Let also 
	\[
		\theta\in H^{\frac{H}{2},2}([0,T])
	\]
be given and define the process $(Z_t)_{t\in [0,T]}$ by
	\begin{multline*}
		Z_t = \exp \left(
			\vphantom{- \frac12 \frac{1}{c_{\frac{H}{2} + \frac{1}{2}}^2 \Gamma \left( \tfrac{H}{2} \right)^2} \int_0^t s^{H} \left[ I^{-\frac{H}{2}}_{0+}(\bullet^{-\frac{H}{2}} \theta)(s) \right]^2 \d{s}}
			- \frac{1}{c_{\frac{H}{2} + \frac{1}{2}} \Gamma \left( \tfrac{H}{2} \right)} \int_0^t s^{\frac{H}{2}} I^{-\frac{H}{2}}_{0+}(\bullet^{-\frac{H}{2}} \theta)(s) \d{B_s} \right.
		\\
		\left. - \frac12 \frac{1}{c_{\frac{H}{2} + \frac{1}{2}}^2 \Gamma \left( \tfrac{H}{2} \right)^2} \int_0^t s^{H} \left[ I^{-\frac{H}{2}}_{0+}(\bullet^{-\frac{H}{2}} \theta)(s) \right]^2 \d{s} \right).
	\end{multline*}
Then the measure $\tilde{\Pbb}$ defined by $\d{\tilde{\Pbb}} = Z_T \d{\Pbb}$ is a probability measure on $(\Omega,\Fcal)$ and the process $(\tilde{R}_t^H)_{t\in [0,T]}$ defined by
	\begin{equation}
		\label{eq:girsanov_rosenblatt_shifted}
		\tilde{R}^H_t = R_t^H + 2 d_H \int_0^t \theta_u \d{B^{\frac{H}{2} + \frac{1}{2}}_u} + d_H \int_0^t \theta^2_u \d{u}
	\end{equation}
is a Rosenblatt process with Hurst index $H$ with respect to $\tilde{\Pbb}$ that is adapted to $\FctT$.
\end{theorem}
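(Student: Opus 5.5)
The approach is to realize the shift in \eqref{eq:girsanov_rosenblatt_shifted} as the effect of a Cameron--Martin shift of the underlying Wiener process $B$ in the double integral \eqref{eq:rosenblatt_representation}. Then Theorem \ref{thm:girsanov_wiener} finishes the argument, since under $\tilde{\Pbb}$ the shifted process is a Wiener process and the same functional of it is a Rosenblatt process. Write $H' = \frac{H}{2}+\frac12$ and
\[
\phi = K_{H'}^{-1}\Big(\int_0^\bullet \theta_r \d{r}\Big) = \frac{1}{c_{H'}\Gamma(\tfrac H2)}\, \bullet^{\frac H2} I^{-\frac H2}_{0+}(\bullet^{-\frac H2}\theta),
\]
as in \eqref{eq:fbm_psi_theta_definition} with $H'-\frac12=\frac H2$. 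With this choice, $Z$ is exactly the Doléans--Dade exponential \eqref{eq:doleans-dane_exponential} for $\phi$.

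\textbf{Step 1 ($\phi$ is deterministic and square integrable).} Since $\theta\in H^{\frac H2,2}([0,T])$ and $\frac H2\in(\frac14,\frac12)$ with $2>1/(1-\frac H2)$, Lemma \ref{lemma:kober-erdelyi_bounded} (with $\alpha=\frac H2$, $p=2$) gives $\theta = F\psi$ for some $\psi\in L^2$, where $\psi = \bullet^{\frac H2}I^{-\frac H2}_{0+}(\bullet^{-\frac H2}\theta)$. Hence $\phi\in L^2([0,T])$ is deterministic. Novikov's condition \eqref{eq:novikov} is then trivial, so $Z$ is a martingale and $\tilde{\Pbb}$ is a probability measure. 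By Theorem \ref{thm:girsanov_wiener}, $\tilde B_t = B_t+\int_0^t\phi_s\d{s}$ is an $\FctT$-Wiener process under $\tilde{\Pbb}$.

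\textbf{Step 2 (algebraic identity).} Define $\hat R_t = 2\int_0^t\int_0^{y_2}\Kcal^H_t(y_1,y_2)\d{\tilde B_{y_1}}\d{\tilde B_{y_2}}$. This is the Rosenblatt functional applied to $\tilde B$, so under $\tilde{\Pbb}$ it is a Rosenblatt process adapted to $\FctT$. Expanding $\d{\tilde B}=\d{B}+\phi\d{s}$ in both variables and using the symmetry of $\Kcal^H_t$ gives three terms: $R^H_t$; a cross term $2\int_0^t\big(\int_0^t \Kcal^H_t(y_1,y_2)\phi_{y_1}\d{y_1}\big)\d{B_{y_2}}$; and the deterministic term $\int_0^t\int_0^t\Kcal^H_t(y_1,y_2)\phi_{y_1}\phi_{y_2}\d{y_1}\d{y_2}$. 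Here the iterated Itô integral with drift in the inner variable is converted into a full single Wiener integral, for instance by checking on elementary kernels and passing to the limit in $L^2$. Inserting $\Kcal^H_t = d_H\int_{y_1\vee y_2}^t \partial_1K_{H'}(u,y_1)\partial_1K_{H'}(u,y_2)\d{u}$ and applying Fubini, the key relation is
\[
\int_0^u \partial_1K_{H'}(u,y)\phi_y\d{y} = \frac{\d{}}{\d{u}}(K_{H'}\phi)(u)=\theta_u .
\]
This follows from $K_{H'}\phi=\int_0^\bullet\theta$, because $K_{H'}(u,u)=0$ lets us differentiate under the integral. The deterministic term then becomes $d_H\int_0^t\theta_u^2\d{u}$. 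The cross term becomes $2d_H\int_0^t\big(\int_{y}^t\partial_1K_{H'}(u,y)\theta_u\d{u}\big)\d{B_y} = 2d_H\int_0^t(\partial_1K^*_{H',t}(\theta\mathds 1_{[0,t]}))(y)\d{B_y}$, which equals $2d_H\int_0^t\theta_u\d{B^{H'}_u}$ by \eqref{eq:stoch_integral_fbm_definition}. Hence $\hat R=\tilde R^H$ $\Pbb$-a.s., and therefore also $\tilde{\Pbb}$-a.s.

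\textbf{Main obstacle.} The difficult part is the integrability bookkeeping in Step 2. It requires showing that $\theta\mathds 1_{[0,t]}\in\Hcal([0,t])$, which holds since $\theta\in L^2$ and $\Hcal$ contains $L^{1/H'}\supset L^2$. It also requires justifying the Fubini interchanges with the singular kernels $(u-y)^{\frac H2-1}$, for which $\theta\in L^2$ and $\phi\in L^2$ suffice through the bounds on $\Kcal^H_t$ obtained from \eqref{eq:int_beta}. Finally, one must justify the stochastic Fubini step for the mixed iterated integral.
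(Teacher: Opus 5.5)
Your proposal is correct and follows the paper's proof: you use the same $\phi=K^{-1}_{\frac H2+\frac12}\int_0^\bullet\theta_r\,\mathrm{d}r\in L^2$ via Lemma~\ref{lemma:kober-erdelyi_bounded}, Novikov plus Theorem~\ref{thm:girsanov_wiener}, and the expansion of the double integral in $\tilde B$, identified term by term through $\theta_u=\int_0^u\partial_1K_{\frac H2+\frac12}(u,y)\phi_y\,\mathrm{d}y$ (the paper splits the FBM integral into $\Xi_t+\Psi_t$ rather than merging the two cross terms first). The only real difference is the stochastic Fubini step for $\int_0^t\big(\int_0^{y_2}\Kcal^H_t(y_1,y_2)\,\mathrm{d}B_{y_1}\big)\phi_{y_2}\,\mathrm{d}y_2$: the paper handles it with the $\varepsilon$-regularized kernels $\Kcal^H_{t,\varepsilon}$, whereas your approximation by elementary kernels works directly, because both sides are continuous linear maps of the kernel, bounded by Cauchy--Schwarz in $L^1(\Omega)$ and $L^2(\Omega)$ respectively by $\|\Kcal^H_t\|_{L^2([0,t]^2)}\|\phi\|_{L^2}$.
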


\begin{proof}
	Similarly as in \eqref{eq:fbm_psi_theta_definition}, let
	\[
		\phi = K^{-1}_{\frac{H}{2}+ \frac{1}{2}} \int_0^\bullet \theta_r \d{r} = c_{\frac{H}{2}+ \frac{1}{2}}^{-1} \Gamma \left( \tfrac{H}{2} \right)^{-1} \bullet^{\frac{H}{2}} I^{-\frac{H}{2}}_{0+}(\bullet^{-\frac{H}{2}} \theta).
	\]
	By Lemma \ref{lemma:kober-erdelyi_bounded}, it holds that $\phi \in L^2([0, T])$ and
	\begin{equation}
		\label{eq:girsanov_rosenblatt_theta_identity}
		\theta = c_{\frac{H}{2}+ \frac{1}{2}} \Gamma \left( \tfrac{H}{2} \right) \bullet^{\frac{H}{2}} I^{\frac{H}{2}}_{0+} (\bullet^{-\frac{H}{2}} \phi).
	\end{equation}
	Let $(\tilde{B}_t)_{t\in [0,T]}$ be defined by $\tilde{B}_t = B_t + \int_0^t \phi_s \d{s}$, $t\in [0,T]$. Using Novikov's condition \eqref{eq:novikov}, it follows that the process $(Z_t)_{t \in [0, T]}$ is an $\FctT$-martingale and, by Theorem \ref{thm:girsanov_wiener}, $(\tilde{B}_t)_{t \in [0, T]}$ is an $\FctT$-Wiener process on $(\Omega, \Fcal, \tilde{\Pbb})$. The claim will follow once we establish the equality
	\begin{equation}
	\label{eq:rosenblatt_girsanov_product_formula}
		\begin{aligned}
			\tilde{R}^H_t &= 2 \int_0^t \int_0^{y_2} \Kcal^{H}_t(y_1,y_2) \d{\tilde{B}_{y_1}} \d{\tilde{B}_{y_2}}
			\\
			&= 2 \int_0^t \int_0^{y_2} \Kcal^{H}_t (y_1,y_2) \left( \d{B_{y_1}} + \phi_{y_1} \d{y_1} \right) \left( \d{B_{y_2}} + \phi_{y_2} \d{y_2} \right).
		\end{aligned}
	\end{equation} 
	
	Let us start with the deterministic term. By \eqref{eq:girsanov_rosenblatt_theta_identity}, the Fubini theorem and the symmetry of the kernel $\Kcal^H_t$, it holds that
	\begin{equation}
		\label{eq:rosenblatt_girsanov_deterministic_term}
		\begin{aligned}
			d_H \int_0^t \theta_u^2 \d{u} &= e_H \Gamma \left( \tfrac{H}{2} \right)^2 \int_0^t \left[ u^{\frac{H}{2}} I^{\frac{H}{2}}_{0+}( \bullet^{-\frac{H}{2}} \phi)(u) \right]^2 \d{u}
			\\
			&= e_H \int_0^t \left[ u^{\frac{H}{2}} \int_0^u (u-y)^{\frac{H}{2}-1} y^{-\frac{H}{2}} \phi_y \d{y} \right]^2 \d{u}
			\\
			&= \int_0^t \int_0^t \Kcal^H_t(y_1, y_2) \phi_{y_1} \phi_{y_2} \d{y_1} \d{y_2}
			\\
			&= 2 \int_0^t \int_0^{y_2} \Kcal^H_t(y_1, y_2) \phi_{y_1} \phi_{y_2} \d{y_1} \d{y_2}.
		\end{aligned}
	\end{equation}
	
	We continue to the stochastic integral. Recalling the definition of the stochastic integral with respect to the FBM \eqref{eq:stoch_integral_fbm_definition} and \eqref{eq:girsanov_rosenblatt_theta_identity}, we deduce
	\begin{align*}
		&2 d_H \int_0^t \theta_{y} \d{B^{\frac{H}{2}+ \frac{1}{2}}_{y}}
		\\
		&\quad = \frac{2 e_H}{c_{\frac{H}{2}+ \frac{1}{2}}} \int_0^t y_2^{-\frac{H}{2}} \left[ \int_{y_2}^t u^H (u-y_2)^{\frac{H}{2}-1} u^{-\frac{H}{2}} \theta_u \d{u} \right] \d{B_{y_2}}
		\\
		&\quad = 2 e_H \int_0^t y_2^{-\frac{H}{2}} \left[ \int_{y_2}^t u^H (u-y_2)^{\frac{H}{2}-1} \right.
		\\
		&\quad \hphantom{= 2 e_H \int_0^t y_2^{-\frac{H}{2}} \bigg[ \ } \times \left. \left( \int_0^u (u-y_1)^{\frac{H}{2}-1} y_1^{-\frac{H}{2}} \phi_{y_1} \d{y_1} \right) \d{u} \right] \d{B_{y_2}}
		\\
		&\quad = \Xi_t + \Psi_t,
	\end{align*}
	where
	\[
		\Xi_t = 2 e_H \int_0^t y_2^{-\frac{H}{2}} \left[ \int_{y_2}^t u^H (u-y_2)^{\frac{H}{2}-1} \right. \left. \left( \int_0^{y_2} (u-y_1)^{\frac{H}{2}-1} y_1^{-\frac{H}{2}} \phi_{y_1} \d{y_1} \right) \d{u} \right] \d{B_{y_2}}
	\]
	and
	\[
		\Psi_t = 2 e_H \int_0^t y_2^{-\frac{H}{2}} \left[ \int_{y_2}^t u^H (u-y_2)^{\frac{H}{2}-1} \right.
		 \left. \left( \int_{y_2}^u (u-y_1)^{\frac{H}{2}-1} y_1^{-\frac{H}{2}} \phi_{y_1} \d{y_1} \right) \d{u} \right] \d{B_{y_2}}.
	\]
	We employ \eqref{eq:girsanov_rosenblatt_theta_identity} and the Fubini theorem to obtain
	\begin{equation}
	\label{eq:rosenblatt_girsanov_det_stoch_term}
		\Xi_t = 2 \int_0^t \int_0^{y_2} \Kcal^H_t(y_1, y_2) \phi_{y_1} \d{y_1} \d{B_{y_2}}.
	\end{equation}
	
	It remains to deal with the term $\Psi_t$. A straightforward application of the stochastic Fubini theorem is not possible due to the lack of integrability and hence, we resort to a variant of the semimartingale approximation from \cite{Tud08}. Let us denote
	\[
		\Psi^\varepsilon_t = 2 e_H \int_0^t y_1^{-\frac{H}{2}} \left[ \int_{y_1}^t u^H (u+\varepsilon-y_1)^{\frac{H}{2}-1} \right.
		 \left. \left( \int_{y_1}^u (u-y_2)^{\frac{H}{2}-1} y_2^{-\frac{H}{2}} \phi_{y_2} \d{y_2} \right) \d{u} \right] \d{B_{y_1}}.
	\]
	Note that, compared to $\Psi$, we have switched $y_1$ and $y_2$. We aim to establish the convergence $\Psi^\varepsilon_t \xrightarrow[\varepsilon\to 0]{L^2(\Omega)} \Psi_t$. Using the It\^o isometry, we deduce
	\[
		\| \Psi^\varepsilon_t - \Psi_t \|_{L^2(\Omega)}^2 \lesssim \int_0^t y_1^{-H} g_\varepsilon(y_1)^2 \d{y_1},
	\]
	where
	\[
		g_\varepsilon(y_1) = \int_{y_1}^t u^{H} \left[ (u-y_1)^{\frac{H}{2}-1} - (u+\varepsilon-y_1)^{\frac{H}{2}-1} \right]
		 \left| \int_{y_1}^u (u-y_2)^{\frac{H}{2}-1} y_2^{-\frac{H}{2}} \phi_{y_2} \d{y_2} \right| \d{u}.
	\]
	We directly estimate
	\begin{align*}
		g_\varepsilon(y_1) &\lesssim \int_{y_1}^t u^H (u-y_1)^{\frac{H}{2}-1} \left( \int_{y_1}^u (u-y_2)^{\frac{H}{2}-1} y_2^{\frac{H}{2}} |\phi_{y_2}| \d{y_2} \right) \d{u}
		\\
		&\lesssim I^{\frac{H}{2}}_{t-}\left( \bullet^H I^{\frac{H}{2}}_{0+}(\bullet^{-\frac{H}{2}} |\phi|) \right) (y_1).
	\end{align*}
	Clearly, $(\bullet-y_1)^{H/2-1} \in L^p([y_1, t])$ for all $p<(1-\tfrac{H}{2})^{-1}$. Since $|\phi| \in L^2([0, t])$, Lemma \ref{lemma:kober-erdelyi_bounded} yields $\bullet^{H/2} I^{H/2}_{0+}(\bullet^{-H/2} |\phi|) \in H^{H/2,2}([0, t]) \subseteq L^{2/(1-H)}([0, t])$ and hence $\bullet^{H} I^{H/2}_{0+}(\bullet^{-H/2} |\phi|) \in L^{2/(1-H)}([0, t])$. Thus, by the dominated convergence theorem and the standing assumption $H \in (1/2, 1)$,
	$g_\varepsilon \xrightarrow[\varepsilon\to 0]{} 0$ almost everywhere on $[0, t]$. The properties of the Riemann--Liouville fractional integral in Section \ref{sect:fracitonal_calculus} and standard embedding theorem \cite[Section 3.3.1]{Tri83} imply that $I^{H/2}_{t-}(\bullet^{H} I^{H/2}_{0+}(\bullet^{-H/2} |\phi|)) \in H^{H/2,2/(1-H)}_0([0, T]) \subseteq C([0, t])$. In particular, up to a multiplicative constant, the function $\bullet^{-H}$ is an integrable dominating function and convergence $\Psi^\varepsilon_t \xrightarrow[\varepsilon\to 0]{L^2(\Omega)} \Psi_t$ follows by another use of the dominated convergence theorem. For $\varepsilon>0$, we define
	\[
		\Kcal^H_{t,\varepsilon}(y_1, y_2) = e_H \mathds{1}_{[0, t]^2}(y_1,y_2) (y_1 y_2)^{-\frac{H}{2}} \int_{y_1 \vee y_2}^t u^{H} (u+\varepsilon-y_1)^{\frac{H}{2}-1} (u-y_2)^{\frac{H}{2}-1} \d{u}.
	\]
	Employing the stochastic Fubini theorem, we observe that
	\[
		\Psi^\varepsilon_t = 2 \int_0^t \int_0^{y_2} \Kcal^H_{t, \varepsilon}(y_1, y_2) \d{B_{y_1}} \phi_{y_2} \d{y_2}.	
	\]
	Similarly as in \cite[Proposition 2]{Tud08}, we may show that
	\[
		\Psi^{\varepsilon}_t \quad\xrightarrow[\varepsilon\to 0]{L^2(\Omega)} \quad 2 \int_0^t \int_0^{y_2} \Kcal^H_{t}(y_1, y_2) \d{B_{y_1}} \phi_{y_2} \d{y_2}
	\]
	and therefore,
	\begin{equation}
		\label{eq:rosenblatt_girsanov_stoch_det_term}
		\Psi_t = 2 \int_0^t \int_0^{y_2} \Kcal^H_{t}(y_1, y_2) \d{B_{y_1}} \phi_{y_2} \d{y_2}.
	\end{equation}
	Collecting equalities \eqref{eq:rosenblatt_girsanov_deterministic_term}, \eqref{eq:rosenblatt_girsanov_det_stoch_term} and \eqref{eq:rosenblatt_girsanov_stoch_det_term}, we verify that the representation in \eqref{eq:rosenblatt_girsanov_product_formula} holds and this concludes the proof.
\end{proof}

In the following remark, we express $R^H$ in equation \eqref{eq:RP_changed} in terms of processes $\tilde{R}^H$ and $\tilde{B}^{\frac{H}{2}+\frac12}$.

\begin{remark}
\label{rem:R_wrt_new_B}
	Let $\phi$ and $(\tilde{B}_t)_{t \in [0, T]}$ be as in the proof of the above theorem. By Theorem \ref{thm:girsanov_fbm}, the process $(\tilde{B}^{\frac{H}{2}+\frac12}_t)_{t \in [0, T]}$ defined by $\tilde{B}^{\frac{H}{2}+\frac12}_t = B^{\frac{H}{2}+\frac12}_t + \int_0^t \theta_s \d{s}$, $t\in [0,T]$, is an FBM with Hurst parameter $\frac{H}{2}+\frac12$ with respect to the measure $\tilde{\Pbb}$. By using the fractional integration by parts \eqref{eq:integration_by_parts_fractional} and the second relation in \eqref{eq:negative_integral}, it holds that
	\begin{align*}
		&2 d_H \int_0^t \theta_{u} \d{B^{\frac{H}{2} + \frac{1}{2}}_{u}}
		\\
		&\quad = 2d_H c_{\frac{H}{2} + \frac{1}{2}} \Gamma \left( \tfrac{H}{2} \right) \int_0^t u^{-\frac{H}{2}} I^{\frac{H}{2}}_{t-}(\bullet^{\frac{H}{2}} \theta)(u) \d{\tilde{B}_{u}}
		\\
		&\quad \hphantom{= } \quad - 2d_H c_{\frac{H}{2} + \frac{1}{2}} \Gamma \left( \tfrac{H}{2} \right) \int_0^t u^{-\frac{H}{2}} \phi_u I^{\frac{H}{2}}_{t-}(\bullet^{\frac{H}{2}} \theta)(u) \d{u}
		\\
		&\quad = 2 d_H \int_0^t \theta_{u} \d{\tilde{B}^{\frac{H}{2} + \frac{1}{2}}_{u}} - 2d_H c_{\frac{H}{2} + \frac{1}{2}} \Gamma \left( \tfrac{H}{2} \right) \int_0^t u^{\frac{H}{2}} \theta_y I^{\frac{H}{2}}_{0+}(\bullet^{-\frac{H}{2}} \phi)(u) \d{u}
		\\
		&\quad = 2 d_H \int_0^t \theta_{u} \d{\tilde{B}^{\frac{H}{2} + \frac{1}{2}}_{u}} - 2d_H \int_0^t \theta_u^2 \d{u}.
	\end{align*}
	Thus, \eqref{eq:girsanov_rosenblatt_shifted} can be written as
	\[
		R^H_t = 	\tilde{R}^H_t - 2 d_H \int_0^t \theta_{u} \d{\tilde{B}^{\frac{H}{2} + \frac{1}{2}}_{u}} + d_H \int_0^t \theta_u^2 \d{u}.
	\]
\end{remark}

In the next example, we study the accessibility of the underlying Wiener process, and therefore also of the Radon--Nikodym derivative, from models involving FBM and Rosenblatt process.

\begin{example}
In what follows, we denote by $(\Fcal^X_t)_{t\in [0,T]}$ the natural filtration of a stochastic process $(X_t)_{t\in [0,T]}$ and by $\overline{\Gcal}$ the completion of a $\sigma$-algebra $\Gcal$ with respect to $\Pbb$.

\emph{Case 1: FBM.} If $B^H$ is the FBM defined by \eqref{eq:fbm_representation}, we can recover the process $B$. To this end,  recall the operator 
	\[ 
		\partial_1K_{H,T}^\ast f = c_H \Gamma\left( H - \tfrac12 \right) \bullet^{-(H-\frac{1}{2})} I^{H-\frac{1}{2}}_{T-}(\bullet^{H-\frac{1}{2}} f).
	\] 
The operator admits a right inverse that is given by 
	\[ 
		(\partial_1K_{H,T}^\ast)^{-1}\psi = c_H^{-1}\Gamma\left( H-\tfrac12\right)^{-1} \bullet^{-(H-\frac{1}{2})}I_{T-}^{-(H-\frac{1}{2})} (\bullet^{H-\frac{1}{2}} \psi)
	\]
and we have, for $t\in (0,T)$, that 
	\[ 
		\int_0^T ((\partial_1 K_{H,T}^\ast)^{-1} \mathds{1}_{(0,t)})(s)\d{B}_s^H = \int_0^T \mathds{1}_{(0,t)}(s)\d{B}_s = B_t
	\]
holds by \eqref{eq:stoch_integral_fbm_definition}. Since $((\partial_1 K_{H,T}^\ast)^{-1} \mathds{1}_{(0,t)})(s) = 0$ for $s>t$, the left-hand side of the above equality is $\mathcal{F}_t^{B^H}$-measurable and so we obtain that $\overline{\Fcal}_t^{B^H} \supseteq \overline{\Fcal}_t^B$. Of course, as $B^H$ is defined by \eqref{eq:fbm_representation}, we also immediately obtain $\mathcal{F}^{B^H}_t \subseteq \Fcal_t^B$ and therefore, we have that $\overline{\Fcal}^{B_H}_t = \overline{\Fcal}^B_t$.

\emph{Case 2: Rosenblatt process.} On the other hand, this is not the case of the Rosenblatt process. Intuitively, this is because the Rosenblatt process is an even functional of the Wiener process. Let us make the intuition more precise. 

Let $(\Omega,\Fcal,\Pbb) = (C([0,T]), \mathcal{B}(C([0,T])), \Pbb_B)$ be the classical Wiener space and let $B:\Omega\to\Omega$, $B(\omega)=\omega$, be the coordinate Wiener process. Let $R^H$ be the Rosenblatt process generated by $B$ via the representation \eqref{eq:rosenblatt_representation}. Define the map $\vartheta:\Omega\to\Omega$ by $\vartheta\omega = -\omega$. For each $t\in [0,T]$, the random variable $R_t^H$ can be viewed as the second-order multiple Wiener--It\^o integral (w.r.\ to $B$) and hence, $R_t^H(\vartheta\omega)=R_t^H(\omega)$, $\omega\in\Omega$. It follows that the map $\Rcal^H_t: \Omega\to C([0,t])$, $\Rcal^H_t(\omega)=(R_s^H(\omega))_{s\in [0,t]}$, satisfies $\Rcal_t^H(\vartheta\omega)=\Rcal_t^H(\omega)$ for every $\omega\in\Omega$. Consider also the map $\Bcal_t :\Omega\to C([0,t])$, $\Bcal_t(\omega) = (B_s(\omega))_{s\in [0,t]}$ and assume that there exists a measurable map $\varPhi: C([0,t])\to C([0,t])$ such that $\Bcal_t = \varPhi (\Rcal_t^H)$ $\Pbb_B$-almost surely. Then we have that 
	\[ 
		-\Bcal_t(\omega) = \Bcal_t(\vartheta\omega) = \varPhi(\Rcal_t^H(\vartheta\omega)) = \varPhi(\Rcal_t^H(\omega)) = \Bcal_t(\omega)
	\]
holds for $\Pbb_B$-almost every $\omega\in\Omega$ so that $\Bcal_t=0$ almost surely which is a contradiction. Hence, there is no measurable map $\varPhi: C([0,t])\to C([0,t])$ such that $\Bcal_t = \varPhi (\Rcal_t^H)$ holds almost surely, i.e.\ the underlying Wiener process cannot be recovered from the Rosenblatt process. 

\emph{Case 3: Mix of FBM and Rosenblatt process.} Let $H, H'\in (1/2,1)$ and let $B$ be a standard Wiener process. Let $B^{H}$ and $R^{H'}$ be the FBM and RP defined by \eqref{eq:fbm_representation} and \eqref{eq:rosenblatt_representation}, respectively. We stress that both the FBM and Rosenblatt process are defined via the same Wiener process. Consider process $(Z_t)_{t\in [0,T]}$ defined by $Z_t=B^{H}_t + R^{H'}_t$, $t\in [0,T]$. Then $\overline{\Fcal}^Z_t = \overline{\Fcal}_t^{B}$, $t\in [0,T]$. Indeed, we again clearly have that $\Fcal_t^Z\subseteq \Fcal_t^B$ by the construction of process $Z$. On the other hand, if we denote by $P_{\mathcal{H}_1}: L^2(\Omega)\to L^2(\Omega)$ the orthogonal projection on the first Wiener chaos $\mathcal{H}_1$ of process $B$, we have that $P_{\mathcal{H}_1}(Z_t) = B^{H}_t$, $t\in [0,T]$, and since $\overline{\Fcal}_t^{B^{H}} = \overline{\Fcal}_t^{B}$, $t\in [0,T]$, by the result in \emph{Case~1} above, we obtain $\overline{\Fcal}_t^Z \supseteq \overline{\Fcal}_t^B$, $t\in [0,T]$. 
\end{example}

In the following example, we illustrate that even simple deterministic shifts of the underlying Wiener process will shift a Rosenblatt process in a stochastic direction.

\begin{example}
	Let $\phi_u = u^{\alpha}$ for some $\alpha > -\tfrac12$. By Novikov's condition, the process $(Z_t)_{t \in [0, T]}$ defined by
	\[
		Z_t = \exp\left( - \int_0^t s^\alpha \d{B_s} - \frac{1}{2(2\alpha+1)} t^{2\alpha+1} \right), \quad t\in [0,T],
	\]
	is an $\FctT$-martingale. Let $\theta: [0, T] \to \Rbb$ be defined by \eqref{eq:girsanov_rosenblatt_theta_identity}. Using \cite[Eq.\ (2.44)]{SamKilMar93}, it holds that
	\[
		\theta_u = c_{\frac{H}{2} + \frac{1}{2}} \mathrm{B}\left(\tfrac{H}{2}, 1+\alpha-\tfrac{H}{2} \right) u^{\alpha+\frac{H}{2}}, \quad u\in [0,T],
	\]
	and hence, the process $(\tilde{R}_t^H)_{t\in [0,T]}$ defined by
	\begin{multline*}
		\tilde{R}^H_t = R^H_t + 2 d_H c_{\frac{H}{2} + \frac{1}{2}} \mathrm B\left( \tfrac{H}{2}, 1+\alpha-\tfrac{H}{2} \right) \int_0^t u^{\alpha+\frac{H}{2}} \d{B^{\frac{H}{2} + \frac{1}{2}}_u}
		\\
		+ \frac{e_H}{2\alpha+H+1} \mathrm{B}\left(\tfrac{H}{2}, 1+\alpha-\tfrac{H}{2} \right)^2 t^{2\alpha+H+1}
	\end{multline*}
	is an Rosenblatt process on $(\Omega, \Fcal, \tilde{\Pbb})$ where $\d{\tilde{\Pbb}} = Z_T \d{\Pbb}$. We observe that both choices $\alpha = 0$ and $\alpha=-\tfrac{H}{2}$ corresponding to the simplest deterministic shifts of the underlying Wiener process $(B_t)_{t \in [0, T]}$ and the Rosenblatt process $(R^H_t)_{t \in [0, T]}$, respectively, require an additional stochastic integral with respect to the FBM $(B^{\frac{H}{2}+\frac12}_t)_{t \in [0, T]}$.
\end{example}

We also illustrate that the stochastic shift is not uniquely determined by the deterministic shift.

\begin{example}
	Let $A \subseteq [0, T]$ be a measurable set of positive measure such that $\per\,A < \infty$ where the perimeter of the set $A$ is defined by
	\[
		\per \ A = \| \mathds{1}_A \|_{BV(\Rbb)}. 	
	\]	
	By, e.g., \cite[Lemma 3]{Sic21}, it holds that
	\[	
		\mathds{1}_A \in B^{\frac12}_{2,\infty}([0, T]) \subseteq B^{\frac{H}{2}}_{2,2}([0, T]) = F^{\frac{H}{2}}_{2,2}([0, T]) = H^{\frac{H}{2},2}([0, T]).
	\]
	Let $\theta^A_u = \mathds{1}_{A}(u) - \mathds{1}_{[0, T] \setminus A}(u)$, 
	then $(\theta^A)^2 = 1$ almost everywhere on $[0, T]$ and, by the above, $\theta^A \in H^{\frac{H}{2},2}([0, T])$. By Theorem \ref{thm:girsanov_rosenblatt}, the process
	\[
		\tilde{R}^{H,A}_t = R^H_t + 2 d_H \int_0^t \theta^A_y \d{B^{\frac{H}{2} + \frac{1}{2}}_y} + d_H t
	\]
	is a Rosenblatt process on $(\Omega, \Fcal, \tilde{\Pbb}_A)$ where $\d{\tilde{\Pbb}_A} = Z^{A}_T \d{\Pbb}$ for suitably defined $Z^A_T$. 	In particular, the processes $(\tilde{R}^{H,A}_t)_{t \in [0, T]}$ are Rosenblatt processes defined on probability spaces with equivalent probability measures $\tilde{\Pbb}_A$ constructed from the original Rosenblatt process $(R^H_t)_{t \in [0, T]}$ by the same deterministic shift supplemented by different Gaussian shifts.
\end{example}

Finally, we address the possibility of drift removal from models involving a Rosenblatt process.

\begin{remark}
    Let $H$, $B$, $B^{\frac{H}{2}+ \frac{1}{2}}$, $R^H$, $\theta$, and $\tilde\Pbb$ be defined as in Theorem~\ref{thm:girsanov_rosenblatt}.
   Then by Theorem~\ref{thm:girsanov_fbm} and by Theorem \ref{thm:girsanov_rosenblatt}, processes $(\tilde{B}_t^{\frac{H}{2} + \frac{1}{2}})_{t\in [0,T]}$ and $(\tilde{R}_t^H)_{t\in [0,T]}$ defined by \[\tilde{B}_t^{\frac{H}{2} + \frac{1}{2}} = B_{t}^{\frac{H}{2} + \frac{1}{2}} + \int_0^t \theta_s\d{s}\quad \mbox{and}\quad \tilde{R}_t^H = R_t^H + 2d_H\int_0^t \theta_s \d{B}_s^{\frac{H}{2} + \frac{1}{2}} + d_H \int_0^t \theta_s^2\d{s},\quad t\in [0,T],\] are an FBM with Hurst index $\frac{H}{2} + \frac{1}{2}$ and Rosenblatt process with Hurst index $H$ under the measure $\tilde{\Pbb}$, respectively. 
   
    Let $a\in L^1([0,T])$ and $b\in L^2([0,T])$, and let $(X_t)_{t\in [0,T]}$ be defined by
        \[ 
            X_t = \int_0^t a_s\d{s} + \int_0^t b_s\d{B}_s^{\frac{H}{2} + \frac{1}{2}} + R_t^H, \quad t\in [0,T].
        \]
   Similarly as in Remark \ref{rem:R_wrt_new_B}, we can rewrite $X$ in terms of $\tilde{B}^{\frac{H}{2} + \frac{1}{2}}$ and $\tilde{R}^H$ as follows:
    \begin{equation}
    \label{eq:X}
        X_t = \int_0^t \left(a_s - b_s\theta_s + d_H\theta_s^2\right)\d{s} + \int_0^t \left(b_s - 2d_H \theta_s\right)\d\tilde{B}_s^{\frac{H}{2} + \frac{1}{2}} + \tilde{R}_t^H, \quad t\in [0,T].
    \end{equation}
It follows that if $ D_s=b_s^2 - 4d_Ha_s \geq 0$, $s\in [0,T]$, then one can choose $\theta_s = (2d_H)^{-1}(b_s \pm D_s^\frac{1}{2})$, $s\in [0,T]$, provided of course that such $\theta$ belongs to $ H^{\frac{H}{2},2}([0,T])$, so that 
    \[
        X_t = \pm\int_0^t D_s^\frac{1}{2}\d\tilde{B}_s^{\frac{H}{2} + \frac{1}{2}} + \tilde{R}_t^H, \quad t\in [0,T].
    \]
If, moreover, $D_s=0$, $s\in [0,T]$, we obtain
    \[
        X_t = \tilde{R}_t^H, \quad t\in [0,T].
    \]
In other words, models of the form \eqref{eq:X} need to have a specific structure ($D_s\geq 0$) should Theorem~\ref{thm:girsanov_rosenblatt} be used to remove the deterministic drift. If $D>0$, such change of measure would yield a centered process. If $D=0$, then this change of measure would yield not only a centered process but a Rosenblatt process with Hurst index $H$.
\end{remark}

\begin{remark}
We expect that by the same technique given here a similar result can be established for the more general class of Hermite processes or even for  broader classes of stochastic processes that admit a representation as multiple Wiener--It\^o integral with a deterministic kernel. 
\end{remark}

\end{document}